\documentclass{amsart}
\usepackage[all]{xy}
\usepackage{color}
\usepackage{amsthm}
\usepackage{amssymb}
\usepackage[colorlinks=true]{hyperref}





\setcounter{equation}{0}

\numberwithin{equation}{section}

\newtheorem{theorem}[equation]{Theorem}
\newtheorem*{theorem*}{Theorem}
\newtheorem{lemma}[equation]{Lemma}

\newtheorem*{conjecture*}{Mamma Conjecture}
\newtheorem*{conjecture1*}{Mamma Conjecture (revisited)}
\newtheorem{proposition}[equation]{Proposition}

\newtheorem*{corollary*}{Corollary}

\theoremstyle{remark}
\newtheorem{definition}[equation]{Definition}

\newtheorem{notation}[equation]{Notation}

\theoremstyle{remark}
\newtheorem{remark}[equation]{Remark}

\setcounter{tocdepth}{1}

\newcommand{\cA}{{\mathcal A}}
\newcommand{\cB}{{\mathcal B}}
\newcommand{\cC}{{\mathcal C}}
\newcommand{\cD}{{\mathcal D}}

\newcommand{\cK}{{\mathcal K}}

\newcommand{\cN}{{\mathcal N}}

\newcommand{\cT}{{\mathcal T}}


\newcommand{\dgHo}{\mathsf{H}^0}

\newcommand{\pe}{\mathsf{pe}}

\newcommand{\bbQ}{\mathbb{Q}}
\newcommand{\bbZ}{\mathbb{Z}}


\DeclareMathOperator{\id}{id}

\DeclareMathOperator{\NChow}{NChow} 
\DeclareMathOperator{\NNum}{NNum} 
\DeclareMathOperator{\NNumK}{NC_{\mathrm{num}}} 




\newcommand{\dg}{\mathsf{dg}}

\newcommand{\Hom}{\mathsf{Hom}}
\newcommand{\End}{\mathsf{End}}


\newcommand{\op}{\mathsf{op}}

\newcommand{\too}{\longrightarrow}

\newcommand{\ie}{\textsl{i.e.}\ }

\title[Kontsevich's noncommutative numerical motives]{Kontsevich's noncommutative numerical motives}
\author{Matilde Marcolli and Gon{\c c}alo~Tabuada}

\address{Matilde Marcolli, Mathematics Department, Mail Code 253-37, Caltech, 1200 E.~California Blvd. Pasadena, CA 91125, USA}
\email{matilde@caltech.edu} 

\address{Gon{\c c}alo Tabuada, Department of Mathematics, MIT, Cambridge, MA 02139}
\email{tabuada@math.mit.edu}

\subjclass[2000]{18D20, 18F30, 18G55, 19A49, 19D55}
\date{\today}

\keywords{Noncommutative algebraic geometry, noncommutative motives}

\thanks{The first named author was partially supported by the NSF grants DMS-0901221 and DMS-1007207.}

\begin{document}
\begin{abstract}
In this note we prove that Kontsevich's category $\NNumK(k)_F$ of noncommutative numerical motives is equivalent to the one constructed by the authors in \cite{Semi}. As a consequence, we conclude that $\NNumK(k)_F$ is abelian semi-simple as conjectured by Kontsevich.
\end{abstract}

\maketitle
\vskip-\baselineskip
\vskip-\baselineskip
\section{Introduction and statement of results}\label{Intro}
Over the past two decades Bondal, Drinfeld, Kaledin, Kapranov, Kontsevich, Van den Bergh, and others, have been promoting a broad noncommutative (algebraic) geometry program where ``geometry'' is performed directly on dg categories; see~\cite{Kapranov1,Kapranov,BV,Drinfeld,Chitalk,Kaledin,IAS,ENS,Miami,finMot}. Among many developments, Kontsevich introduced a rigid symmetric monoidal category $\NNumK(k)_F$ of noncommutative numerical motives (over a ground field $k$ and with coefficients in a field $F$); consult \S\ref{sec:K} for details. The key ingredient in his approach is the existence of a well-behaved bilinear form on the Grothendieck group of certain smooth and proper dg categories.

Recently, the authors introduced in \cite{Semi} an alternative rigid symmetric monoidal category $\NNum(k)_F$ of noncommutative numerical motives; consult \S\ref{sec:semi}. In contrast with Kontsevich's approach, the authors used Hochschild homology in order to formalize the word ``counting'' in the noncommutative world. Our main result is the following\,:
\begin{theorem}\label{thm:main}
The categories $\NNumK(k)_F$ and $\NNum(k)_F$ are equivalent.
\end{theorem}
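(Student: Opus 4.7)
The plan is to identify both categories as pseudo-abelian envelopes of the category of noncommutative Chow motives $\NChow(k)_F$ modulo a $\otimes$-ideal of ``numerically trivial'' morphisms, and then show that the two ideals in question actually coincide.

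First I would recall, in two parallel sections \S\ref{sec:K} and \S\ref{sec:semi}, the precise constructions of Kontsevich's $\NNumK(k)_F$ and of the category $\NNum(k)_F$ from \cite{Semi}. The former is built from the additive category of smooth and proper dg categories, with $\Hom(A,B) = K_0(A^{\op} \otimes B)_F$, quotiented by the ideal $\cN^K$ of morphisms $f$ satisfying $\chi(f \circ g) = 0$ for every $g:B \to A$, where $\chi$ is the Euler bilinear form coming from $[M] \mapsto \sum (-1)^i \dim_k \Homol^i(M)$ on perfect complexes. The latter is the pseudo-abelian envelope of $\NChow(k)_F/\cN$, where $\cN$ is the largest $\otimes$-ideal strictly below the unit and is described in \cite{Semi} intrinsically in terms of Hochschild homology traces. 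The point is then to check that the two starting symmetric monoidal categories may be identified (both have smooth proper dg categories as objects and $K_0(A^{\op}\otimes B)_F$ as morphisms up to the standard pseudo-abelianization), so the entire comparison reduces to a comparison of ideals.

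The key step is therefore to prove that $\cN^K = \cN$ on $\NChow(k)_F$. For this I would use the fact that on a smooth and proper dg category the Euler form factors canonically through the Chern character/trace map to Hochschild homology\,: for $M \in \perf(A^{\op}\otimes B)$ and $N \in \perf(B^{\op}\otimes A)$, the class $[N \otimes^{\bbL}_B M] \in K_0(A^{\op}\otimes A)_F$ is sent by the Euler characteristic $\chi$ to the same scalar as the composed Hochschild trace $\HH(A) \to k$ of the image of $[N\otimes^{\bbL}_B M]$ under the noncommutative Chern character. This is the standard Hattori--Stallings/categorical trace identity, which is available precisely because smoothness and properness make $A$ dualizable in the symmetric monoidal $\infty$-category $\dgcat$. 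Consequently the pairing used by Kontsevich and the Hochschild-homological pairing used in \cite{Semi} coincide, which forces the two ideals of numerically trivial morphisms to agree.

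Once the ideals are identified, the quotient categories coincide on the nose; taking pseudo-abelian envelopes on both sides (where needed) gives the required equivalence of rigid symmetric monoidal categories, and the proof is complete. The main obstacle I expect is the bookkeeping in Step~1\,: making sure that Kontsevich's category (as originally described in \cite{Chitalk,IAS,ENS,Miami}, without an explicit pseudo-abelian completion) and the authors' $\NChow(k)_F$ really have the same Hom groups after idempotent completion, so that it is legitimate to compare the ideals on a common ambient category. Once this is set up cleanly, the analytic content collapses into the categorical trace identity above.
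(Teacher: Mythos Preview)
Your proposal misidentifies Kontsevich's bilinear form, and this hides the entire content of the proof. In the paper (see \S\ref{sec:K}), Kontsevich's $\chi(-,-)$ is \emph{not} a pairing between a morphism $f\colon A\to B$ and a morphism $g\colon B\to A$ via $\chi(f\circ g)$. Rather, for each ordered pair of objects one takes the smooth proper dg category $\cC=A^{\op}\otimes B$ and considers the Euler form $\chi\colon K_0(\cC)_F\otimes_F K_0(\cC)_F\to F$, $([M],[N])\mapsto \sum_i(-1)^i\dim\Hom_{\cD_c(\cC)}(M,N[-i])$; both arguments are bimodules going in the \emph{same} direction $A\to B$. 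The ideal $\cK\mathrm{er}(\chi)$ is the kernel of this form. Your description ``$\chi(f\circ g)=0$ for all $g\colon B\to A$'' is essentially already the authors' ideal $\cN$, so with your formulation there is nothing left to prove.

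Because of this, two genuine steps are missing from your plan. First, $\chi$ is not symmetric, so one must show that its left and right kernels coincide; the paper does this (Theorem~\ref{thm:K1}) by producing a Serre functor $S$ on $\cD_c(\cA)$ for any smooth proper $\cA$ (Theorem~\ref{thm:aux}) and using $\chi(M,N)=\chi(N,S(M))$ (Lemma~\ref{lem:aux1}). Second, to compare $\chi$ with the Hochschild pairing $\langle-\cdot-\rangle$ one needs the duality isomorphism $D(-)\colon \Hom((A,e),(B,e'))\stackrel{\sim}{\to}\Hom((B,e'),(A,e))$ and the identity $\chi(X,Y)=\mathrm{tr}\big([Y\otimes_B D(X)]\big)$ (Proposition~\ref{prop:trace}), which together give the commutative square of Proposition~\ref{prop:diag-com}. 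Your Hattori--Stallings remark is in the right spirit for this second step, but it only becomes relevant once you insert the dual $D(X)$; without that and without the Serre-functor argument, the comparison of ideals does not go through.
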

By combining Theorem~\ref{thm:main} with \cite[Thm.~1.9]{Semi} we then obtain\,:
\begin{theorem}\label{thm:main2}
Assume that $k$ is a field extension of $F$ or vice-versa. Then, the category $\NNumK(k)_F$ is abelian semi-simple.
\end{theorem}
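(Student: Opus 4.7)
The plan is essentially a one-line deduction, so I would structure the proof as an explicit transfer of a categorical property across an equivalence. First I would invoke Theorem~\ref{thm:main}, which supplies an equivalence of categories $\NNumK(k)_F \simeq \NNum(k)_F$. Note that, in the course of proving Theorem~\ref{thm:main}, what one actually constructs is not merely an abstract equivalence of categories but an equivalence of $F$-linear (in fact rigid symmetric monoidal) categories; I would make sure to record this stronger statement, because abelian semi-simplicity is a property of the underlying $F$-linear structure.

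Next I would cite \cite[Thm.~1.9]{Semi}, which, under the hypothesis that $k$ is a field extension of $F$ or vice-versa, asserts that $\NNum(k)_F$ is abelian semi-simple. This is precisely the hypothesis imposed in the statement of Theorem~\ref{thm:main2}, so the result applies verbatim.

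Finally, I would observe that the notions of being abelian and of being semi-simple are invariant under $F$-linear equivalence of categories: kernels, cokernels, biproducts, and the decomposition of every object as a finite direct sum of simple objects can all be transported along an equivalence. Therefore, pulling back the abelian semi-simple structure on $\NNum(k)_F$ through the equivalence of Theorem~\ref{thm:main} endows $\NNumK(k)_F$ with the same structure, which proves Theorem~\ref{thm:main2}.

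There is no real obstacle here; the entire content of Theorem~\ref{thm:main2} is absorbed into Theorem~\ref{thm:main} and the previous work \cite{Semi}. The only minor point worth double-checking is that the equivalence of Theorem~\ref{thm:main} respects the $F$-linear structure (so that ``abelian semi-simple over $F$'' transfers), but this should be immediate from the construction used to establish Theorem~\ref{thm:main}.
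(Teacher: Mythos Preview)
Your proposal is correct and matches the paper's approach exactly: the paper derives Theorem~\ref{thm:main2} in one line by combining the equivalence of Theorem~\ref{thm:main} with \cite[Thm.~1.9]{Semi}. Your added remarks about the equivalence being $F$-linear and about the invariance of abelian semi-simplicity under equivalence are reasonable elaborations of what the paper leaves implicit.
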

Assuming several (polarization) conjectures, Kontsevich conjectured Theorem~\ref{thm:main2} in the particular case where $F=\bbQ$ and $k$ is of characteristic zero; see \cite{IAS}. We observe that Kontsevich's beautiful insight not only holds much more generally, but moreover it does not require the assumption of any (polarization) conjecture.

\subsection*{Notations}\label{sec:notations}
We will work over a (fixed) ground field $k$. The field of coefficients will be denoted by $F$. Let $(\cC(k),\otimes,k)$ be the symmetric monoidal category of complexes of $k$-vector spaces. We will use {\em cohomological} notation, \ie the differential increases the degree.
\section{Differential graded categories}\label{sec:dg}
A {\em differential graded (=dg) category} $\cA$ (over $k$) is a category enriched over $\cC(k)$, \ie the morphism sets $\cA(x,y)$ are complexes of $k$-vector spaces and the composition operation fulfills the Leibniz rule $d(f \circ g)=d(f) \circ g + (-1)^{\mathrm{deg}(f)}\circ d(g)$; consult Keller's ICM address~\cite{ICM} for further details.

The {\em opposite} dg category $\cA^\op$ has the same objects as $\cA$ and complexes of morphisms given by $\cA^\op(x,y):=\cA(y,x)$. The $k$-linear category $\dgHo(\cA)$ has the same objects as $\cA$ and morphisms given by $\dgHo(\cA)(x,y):= \textrm{H}^0\cA(x,y)$, where $\mathrm{H}^0$ denotes $0^{\mathrm{th}}$-cohomology. A {\em right dg $\cA$-module} $M$ (or simply a $\cA$-module) is a dg functor $M:\cA^\op \to \cC_\dg(k)$ with values in the dg category $\cC_\dg(k)$ of complexes of $k$-vector spaces. We will denote by $\cC(\cA)$ the category of $\cA$-modules. Recall from \cite[\S3]{ICM} that $\cC(\cA)$ carries a {\em projective} model structure. Moreover, the differential graded structure of $\cC_\dg(k)$ makes $\cC(\cA)$ naturally into a dg category $\cC_\dg(\cA)$. The dg category $\cC_\dg(\cA)$ endowed with the projective model structure is a {\em $\cC(k)$-model category} in the sense of \cite[Def.~4.2.18]{Hovey}. Let $\cD(\cA)$ be the {\em derived category} of $\cA$, \ie the localization of $\cC(\cA)$ with respect to the class of weak equivalences. Its full triangulated subcategory of compact objects (\ie those $\cA$-modules $M$ such that the functor $\Hom_{\cD(\cA)}(M,-)$ preserves arbitrary sums; see \cite[Def.~4.2.7]{Neeman}) will be denoted by $\cD_c(\cA)$.
\begin{notation}\label{not:perf}
We will denote by $\widehat{\cA}_\pe$ the full dg subcategory of $\cC_\dg(\cA)$ consisting of those cofibrant $\cA$-modules which become compact in $\cD(\cA)$.
Since all the objects in $\cC(\cA)$ are fibrant, and $\cC_\dg(\cA)$ is a $\cC(k)$-model category, we have natural isomorphisms of $k$-vector spaces
\begin{equation}\label{eq:nat-iso}
\textrm{H}^i\widehat{\cA}_\pe(M,N) \simeq \Hom_{\cD_c(\cA)}(M,N[-i]) \qquad i \in \bbZ\,.
\end{equation}
As any $\cA$-module admits a (functorial) cofibrant approximation, we obtain a natural equivalence of triangulated categories $\dgHo(\widehat{\cA}_\pe)\simeq \cD_c(\cA)$.
\end{notation}
The {\em tensor product} $\cA\otimes \cB$ of two dg categories is defined as follows: the set of objects is the cartesian product of the sets of objects, and the complexes of morphisms are given by $(\cA\otimes \cB)((x,x'),(y,y')):= \cA(x,y) \otimes \cB(x',y')$. A {\em $\cA\textrm{-}\cB$-bimodule $X$} is a dg functor $X: \cA\otimes \cB^\op \to \cC_\dg(k)$, or in other words a $(\cA^\op \otimes \cB)$-module.

\begin{definition}[Kontsevich \cite{IAS,ENS}]
A dg category $\cA$ is {\em smooth} if the $\cA\textrm{-}\cA$-bimodule 
\begin{eqnarray*}
\cA(-,-): \cA\otimes \cA^\op \too \cC_\dg(k) && (x,y) \mapsto \cA(y,x)
\end{eqnarray*}
belongs to $\cD_c(\cA^\op\otimes \cA)$, and {\em proper} if for each ordered pair of objects $(x,y)$ we have $ \sum_i \mathrm{dim}\, \textrm{H}^i\cA(x,y) < \infty$.
\end{definition}

\section{Noncommutative Chow motives}
The rigid symmetric monoidal category $\NChow(k)_F$ of {\em noncommutative Chow motives} was constructed\footnote{In {\em loc.~cit.} we have worked more generally over a ground commutative ring $k$.} in \cite{CvsNC, IMRN}. It is defined as the pseudo-abelian envelope of the category whose objects are the smooth and proper dg categories, whose morphisms from $\cA$ to $\cB$ are given by the $F$-linearized Grothendieck group $K_0(\cA^\op \otimes \cB)_F$, and whose composition operation is induced by the tensor product of bimodules. In analogy with the commutative world, the morphisms of $\NChow(k)_F$ are called {\em correspondences}. The symmetric monoidal structure is induced by the tensor product of dg categories. 
\section{Kontsevich's approach}\label{sec:K}
 In this section we recall and enhance Kontsevich's construction of the category $\NNumK(k)_F$ of noncommutative numerical motives; consult \cite{IAS}. Let $\cA$ be a proper dg category. By construction, the dg category $\widehat{\cA}_\pe$ is also proper and we have a natural equivalence of triangulated categories $\dgHo(\widehat{\cA}_\pe)\simeq \cD_c(\cA)$. Hence, thanks to the natural isomorphisms \eqref{eq:nat-iso}, we can consider the following assignment
\begin{eqnarray*}
 \mathrm{obj}\, \cD_c(\cA) \times \mathrm{obj} \,\cD_c(\cA) \too \bbZ && (M,N) \mapsto \chi(M,N)\,,
\end{eqnarray*} 
where $\chi(M,N)$ is the integer 
$$\sum_i (-1)^i \mathrm{dim}\, \Hom_{\cD_c(\cA)}(M,N[-i])\,.$$ 

Recall that the Grothendieck group $K_0(\cA)$ of $\cA$ can be defined as the Grothendieck group of the triangulated category $\cD_c(\cA)$. A simple verification shows that the above assignment gives rise to a well-defined bilinear form $K_0(\cA) \otimes_\bbZ K_0(\cA) \to \bbZ$.
 By tensoring it with $F$, we then obtain
\begin{equation}\label{eq:bilinear}
\chi(-,-): K_0(\cA)_F \otimes_F K_0(\cA)_F \too F\,.
\end{equation}
The bilinear form \eqref{eq:bilinear} is in general not symmetric. Let 
$$\mathrm{Ker}_L(\chi):=\{\underline{M} \in K_0(\cA)_F \, |\, \chi(\underline{M},\underline{N})=0\,\, \mathrm{for}\,\, \mathrm{all}\,\, \underline{N} \in K_0(\cA)_F\}$$
$$\mathrm{Ker}_R(\chi):=\{\underline{N} \in K_0(\cA)_F \, |\, \chi(\underline{M},\underline{N})=0\,\, \mathrm{for}\,\, \mathrm{all}\,\, \underline{M} \in K_0(\cA)_F\}$$
be, respectively, its left and right kernel. These $F$-linear subspaces of $K_0(\cA)_F$ are in general distinct. However, as we will prove in Theorem~\ref{thm:K1}, they agree when we assume that $\cA$ is moreover smooth. In order to prove this result, let us start by recalling Bondal-Kapranov's notion of a Serre functor. Let $\cT$ be a $k$-linear {\em $\mathrm{Ext}$-finite} triangulated category, \ie $\sum_i \mathrm{dim}\, \Hom_\cT(M,N[-i])< \infty$ for any two objects $M$ and $N$ in $\cT$. Following Bondal and Kapranov \cite[\S3]{Kapranov1}, a {\em Serre functor} $S: \cT \stackrel{\sim}{\to} \cT$ is an autoequivalence together with bifunctorial isomorphisms
\begin{equation}\label{eq:Serre}
\Hom_\cT(M,N) \simeq \Hom_\cT(N,S(M))^\ast\,,
\end{equation}
where $(-)^\ast$ stands for the $k$-duality functor. Whenever a Serre functor exists, it is unique up to isomorphism.
\begin{theorem}\label{thm:aux}
Let $\cA$ be a smooth and proper dg category. Then, the triangulated category $\cD_c(\cA)$ admits a Serre functor.
\end{theorem}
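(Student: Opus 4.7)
The plan is to construct $S$ explicitly as tensoring with the $k$-linear dual of the diagonal bimodule, verify the Serre duality formula \eqref{eq:Serre} by standard bimodule manipulations, and then show $S$ is an autoequivalence using the rigidity of the bimodule category. Let $D\cA := R\Hom_k(\cA(-,-), k) \in \cD(\cA^\op \otimes \cA)$ denote the $k$-linear dual of the diagonal bimodule. Properness of $\cA$ ensures that each complex $\cA(x,y)$ has finite-dimensional total cohomology, so that $D\cA$ is well-behaved as a bimodule; smoothness places the diagonal in $\cD_c(\cA^\op \otimes \cA)$, and together these hypotheses force $D\cA \in \cD_c(\cA^\op \otimes \cA)$ as well. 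I would then define
\[
S\colon \cD_c(\cA) \too \cD_c(\cA), \qquad S(M) := M \otimes^{\bbL}_\cA D\cA,
\]
noting that $S$ preserves compactness because $D\cA$ is compact as a bimodule.

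To verify the Serre duality isomorphism, fix compact $M, N$ and use the standard identification $R\Hom_\cA(N, X) \simeq X \otimes^{\bbL}_\cA N^\vee$ valid for $N$ perfect, with $N^\vee := R\Hom_\cA(N, \cA)$. This gives
\[
R\Hom_\cA(N, S(M)) \simeq M \otimes^{\bbL}_\cA D\cA \otimes^{\bbL}_\cA N^\vee.
\]
The defining $k$-duality property of $D\cA$ yields $D\cA \otimes^{\bbL}_\cA N^\vee \simeq R\Hom_k(N, k)$, and one further standard manipulation rewrites the result as $R\Hom_k(R\Hom_\cA(M, N), k)$. Passing to $H^0$ produces the bifunctorial isomorphism \eqref{eq:Serre}.

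Finally, $S$ is an autoequivalence: smoothness of $\cA$ makes $(\cD_c(\cA^\op \otimes \cA), \otimes^{\bbL}_\cA, \cA)$ into a rigid symmetric monoidal category, so $D\cA$ admits a tensor-inverse $(D\cA)^{-1}$, and $S^{-1} := - \otimes^{\bbL}_\cA (D\cA)^{-1}$ is a two-sided inverse to $S$.

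The main technical obstacle is verifying that $D\cA$ is itself a compact bimodule. This is precisely where both hypotheses enter essentially: smoothness produces a finite bimodule resolution of the diagonal by representable pieces of the form $\cA(-,x) \otimes \cA(y,-)$, while properness ensures that $k$-linear duality sends this resolution to a resolution by bimodules with finite-dimensional cohomology, hence to a compact object of $\cD(\cA^\op \otimes \cA)$. Once this compactness is secured, everything else reduces to a routine chain of tensor-$\Hom$ adjunctions in the derived bimodule category.
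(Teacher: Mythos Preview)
Your approach is genuinely different from the paper's. The paper argues abstractly: it verifies that $\cD_c(\cA)$ is Ext-finite, pseudo-abelian, and admits a strong generator (after reducing to a dg algebra and invoking Shklyarov), and then appeals to the Bondal--Kapranov/Bondal--Van den Bergh representability theorem to produce the Serre functor. Your explicit construction via $D\cA$ is closer to what Shklyarov himself does and has the advantage of yielding a formula for $S$.

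That said, your argument that $S$ is an autoequivalence contains a genuine error. You assert that smoothness makes $(\cD_c(\cA^\op\otimes\cA),\otimes^{\bbL}_\cA,\cA)$ a rigid symmetric monoidal category and conclude that $D\cA$ is $\otimes$-invertible. This monoidal structure is not symmetric in general, and more importantly \emph{rigidity does not imply invertibility}: in a rigid monoidal category every object is dualizable, but dualizable objects need not be invertible (finite-dimensional vector spaces are all dualizable, yet only the one-dimensional ones are invertible). Nothing you have written forces $D\cA$ to admit a tensor inverse. The standard fix is to exhibit the inverse directly as the inverse dualizing bimodule $\cA^{!}:=R\Hom_{\cA^\op\otimes\cA}(\cA,\cA^\op\otimes\cA)$ and to verify $D\cA\otimes^{\bbL}_\cA\cA^{!}\simeq\cA\simeq\cA^{!}\otimes^{\bbL}_\cA D\cA$; this is nontrivial and is essentially the content of the Shklyarov reference the paper cites. (Applying your Serre isomorphism twice does show $S$ is fully faithful, but essential surjectivity still requires producing and checking such an inverse.)

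A smaller imprecision: in your compactness argument for $D\cA$, the step ``finite-dimensional cohomology, hence compact'' is not a consequence of properness; it uses smoothness of $\cA^\op\otimes\cA$ a second time (over a smooth dg category, a module whose values are perfect $k$-complexes is compact). Your sketch attributes this implication to the wrong hypothesis.
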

\begin{proof}
Note first that the properness of $\widehat{\cA}_\pe$, the equivalence of categories $\dgHo(\widehat{\cA}_\pe)\simeq \cD_c(\cA)$, and the natural isomorphisms \eqref{eq:nat-iso}, imply that $\cD_c(\cA)$ is $\mathrm{Ext}$-finite. By combining \cite[Corollary~3.5]{Kapranov1} with \cite[Thm.~1.3]{BV}, it suffices then to show that $\cD_c(\cA)$ is pseudo-abelian and that it admits a strong generator; consult \cite[page~2]{BV} for the notion of {\em strong} generator. The fact that $\cD_c(\cA)$ is pseudo-abelian is clear from its own definition. In order to prove that it admits a strong generator, we may combine \cite[Prop.~4.10]{CT1} with \cite[Thm.~4.12]{ICM} to conclude that $\cA$ is dg Morita equivalent to a dg algebra $A$. Hence, without loss of generality, we may replace $\cA$ by $A$. The proof that $\cD_c(A)$ admits a strong generator now follows from the arguments of Shklyarov on \cite[page~7]{Shklyarov}, which were inspired by Bondal-Van den Bergh's original proof of \cite[Thm.~3.1.4]{BV}.
\end{proof}
\begin{lemma}\label{lem:aux1}
Let $\cA$ be a smooth and proper dg category and $M, N \in \cD_c(\cA)$. Then, we have the following equalities
$$ \chi(M,N) =\chi(N,S(M))=\chi(S^{-1}(N),M)\,,$$
where $S$ is the Serre functor given by Theorem~\ref{thm:aux}.
\end{lemma}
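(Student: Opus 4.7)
The plan is to derive both equalities directly from the defining property of the Serre functor $S$ given by Theorem~\ref{thm:aux}, using $\mathrm{Ext}$-finiteness (so that all dimensions are finite) and the fact that $\chi$ is defined as a (bi-infinite, but finite) alternating sum over shifts.

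First I would unwind the definition:
\begin{equation*}
\chi(N,S(M)) \;=\; \sum_{i} (-1)^i \dim\Hom_{\cD_c(\cA)}(N,S(M)[-i]).
\end{equation*}
Applying the Serre duality isomorphism~\eqref{eq:Serre} to the pair $(M,N[-i])$ (or, equivalently, to $(N[-i],S(M))$ and then commuting the shift across $\Hom$) gives
\begin{equation*}
\Hom_{\cD_c(\cA)}(M,N[-i]) \;\simeq\; \Hom_{\cD_c(\cA)}(N[-i],S(M))^{\ast} \;\simeq\; \Hom_{\cD_c(\cA)}(N,S(M)[i])^{\ast}.
\end{equation*}
Since the $k$-dual preserves dimension (all spaces are finite-dimensional by $\mathrm{Ext}$-finiteness of $\cD_c(\cA)$, which was established in the proof of Theorem~\ref{thm:aux}), this gives $\dim\Hom(M,N[-i])=\dim\Hom(N,S(M)[i])$. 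Reindexing the sum by $j=-i$ turns $\chi(M,N)$ into $\chi(N,S(M))$, establishing the first equality.

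For the second equality I would play the same game with the roles reversed: use~\eqref{eq:Serre} on $(S^{-1}(N),M[-i])$ to obtain
\begin{equation*}
\Hom_{\cD_c(\cA)}(S^{-1}(N),M[-i]) \;\simeq\; \Hom_{\cD_c(\cA)}(M[-i],N)^{\ast} \;\simeq\; \Hom_{\cD_c(\cA)}(M,N[i])^{\ast},
\end{equation*}
and then the same reindexing $j=-i$ converts $\chi(S^{-1}(N),M)$ into $\chi(M,N)$. Alternatively one can just apply the first equality to the pair $(S^{-1}(N),M)$ in place of $(M,N)$, since $S(S^{-1}(N))=N$.

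There is no real obstacle here: once Theorem~\ref{thm:aux} is in hand, providing the Serre functor and (implicitly) the $\mathrm{Ext}$-finiteness of $\cD_c(\cA)$, the lemma is a purely formal manipulation. The only point one must be careful about is bookkeeping the shift: the Serre isomorphism introduces a sign swap $i\mapsto -i$ in the indexing of the alternating sum, which is harmless precisely because the sum runs over all of $\bbZ$ and $(-1)^{-i}=(-1)^i$.
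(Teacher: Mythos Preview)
Your proposal is correct and follows essentially the same route as the paper: apply the Serre duality isomorphism~\eqref{eq:Serre}, use finite-dimensionality so the $k$-dual preserves dimension, shift the suspension across $\Hom$, and reindex the alternating sum. Your alternative derivation of the second equality by plugging $(S^{-1}(N),M)$ into the first is a pleasant shortcut the paper does not spell out, but otherwise the arguments coincide.
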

\begin{proof}
Consider the following sequence of equalities\,:
\begin{eqnarray}
\chi(M,N) & =& \sum_i (-1)^i \mathrm{dim}\, \Hom_{\cD_c(\cA)}(M,N[-i]) \nonumber \\
& =& \sum_i (-1)^i \mathrm{dim}\, \Hom_{\cD_c(\cA)}(N[-i],S(M)) \label{eq:equality3} \\
& =& \sum_i (-1)^i \mathrm{dim}\, \Hom_{\cD_c(\cA)}(N,S(M)[i]) \label{eq:equality4} \\
& =& \chi(N,S(M))\,. \label{eq:equality5}
\end{eqnarray}
Equivalence \eqref{eq:equality3} follows from the bifunctorial isomorphisms \eqref{eq:Serre} and from the fact that a finite dimensional $k$-vector space and its $k$-dual have the same dimension. Equivalence \eqref{eq:equality4} follows from the fact that the suspension functor in an autoequivalence of the triangulated category $\cD_c(\cA)$. Finally, equivalence~\eqref{eq:equality5} follows from a reordering of the finite sum which does not alter the sign of each term. This shows the equality $\chi(M,N)=\chi(N,S(M))$. The equality $\chi(M,N)=\chi(S^{-1}(N),M)$ is proven in  a similar way. Simply use
$$ \Hom_\cT(M,N) \simeq \Hom_\cT(S^{-1}(N),M)^\ast$$
instead of the bifunctorial isomorphisms \eqref{eq:Serre}.
\end{proof}
\begin{theorem}\label{thm:K1}
Let $\cA$ be a smooth and proper dg category. Then, $\mathrm{Ker}_L(\chi)=\mathrm{Ker}_R(\chi)$; the resulting well-defined subspace of $K_0(\cA)_F$ will be denoted by $\mathrm{Ker}(\chi)$.
\end{theorem}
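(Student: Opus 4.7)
The plan is to leverage the Serre functor $S$ provided by Theorem~\ref{thm:aux} together with the two symmetry identities recorded in Lemma~\ref{lem:aux1}, and to observe that $S^{\pm 1}$ acts as an automorphism on $K_0(\cA)_F$.

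First I would note that because $\cA$ is smooth and proper, Theorem~\ref{thm:aux} gives a Serre functor $S:\cD_c(\cA)\isoto\cD_c(\cA)$. Being an autoequivalence of the triangulated category $\cD_c(\cA)$, it induces a $\bbZ$-linear automorphism of the Grothendieck group $K_0(\cA)$, and hence, after extending scalars, an $F$-linear automorphism of $K_0(\cA)_F$. In particular, the assignment $\underline{N}\mapsto S^{\pm 1}(\underline{N})$ is a bijection $K_0(\cA)_F\isotoo K_0(\cA)_F$.

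Next I would establish the inclusion $\mathrm{Ker}_L(\chi)\subseteq \mathrm{Ker}_R(\chi)$ as follows. Suppose $\underline{M}\in\mathrm{Ker}_L(\chi)$, so that $\chi(\underline{M},\underline{N})=0$ for every $\underline{N}\in K_0(\cA)_F$. By the second identity of Lemma~\ref{lem:aux1}, $\chi(\underline{M},\underline{N})=\chi(S^{-1}(\underline{N}),\underline{M})$, so we obtain $\chi(S^{-1}(\underline{N}),\underline{M})=0$ for every $\underline{N}\in K_0(\cA)_F$. Since $S^{-1}$ is an automorphism of $K_0(\cA)_F$, the element $\underline{N}':=S^{-1}(\underline{N})$ ranges over all of $K_0(\cA)_F$ as $\underline{N}$ does, so $\chi(\underline{N}',\underline{M})=0$ for every $\underline{N}'\in K_0(\cA)_F$, i.e.\ $\underline{M}\in\mathrm{Ker}_R(\chi)$. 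The reverse inclusion is entirely analogous, using the identity $\chi(\underline{M},\underline{N})=\chi(\underline{N},S(\underline{M}))$: if $\underline{M}\in\mathrm{Ker}_R(\chi)$, then $\chi(\underline{L},\underline{M})=\chi(\underline{M},S(\underline{L}))=0$ for all $\underline{L}$, and as $S(\underline{L})$ exhausts $K_0(\cA)_F$ we conclude $\underline{M}\in\mathrm{Ker}_L(\chi)$.

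There is no real obstacle here: the proof is essentially a one-line manipulation once Theorem~\ref{thm:aux} and Lemma~\ref{lem:aux1} are available. The only point deserving explicit mention is the bookkeeping step that $S^{\pm 1}$ descends to an automorphism of $K_0(\cA)_F$, so that the defining quantifier ``for all $\underline{N}\in K_0(\cA)_F$'' is preserved under the substitution $\underline{N}\mapsto S^{\pm 1}(\underline{N})$; this is what makes the swap between the left and right kernels possible.
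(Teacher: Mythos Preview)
Your proof is correct and follows essentially the same approach as the paper's: both arguments reduce immediately to the Serre functor identities of Lemma~\ref{lem:aux1}. The only cosmetic difference is that the paper expands $\underline{M}$ as an $F$-linear combination of classes $[M_i]$ and checks against generators $[N]$, whereas you phrase the same step more cleanly by observing that $S^{\pm1}$ acts as an $F$-linear automorphism of $K_0(\cA)_F$ so that the quantifier ``for all $\underline{N}$'' is preserved under the substitution.
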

\begin{proof}
We start by proving the inclusion $\mathrm{Ker}_L(\chi)\subseteq \mathrm{Ker}_R(\chi)$. Let $\underline{M}$ be an element of $\mathrm{Ker}_L(\chi)$. Since $K_0(\cA)_F$ is generated by the elements of shape $[N]$, with $N \in \cD_c(\cA)$, it suffices then to show that $\chi([N],\underline{M})=0$ for every such $N$. Note that $\underline{M}$ can be written as $[a_1 M_1 + \cdots  + a_n M_n]$, with $a_1, \ldots , a_n \in F$ and $M_1, \ldots, M_n \in \cD_c(\cA)$. We have then the following equalities
\begin{eqnarray}
\chi([N],\underline{M}) & =& a_1\chi(N,M_1) + \cdots + a_n \chi(N,M_n) \nonumber \\
& =& a_1\chi(M_1,S(N)) + \cdots + a_n \chi(M_n,S(N)) \label{eq:equality6} \\
& =& \chi(\underline{M},[S(N)])\,, \nonumber
\end{eqnarray}
where \eqref{eq:equality6} follows from Lemma~\ref{lem:aux1}. Finally, since by hypothesis $\underline{M}$ belongs to $\mathrm{Ker}_L(\chi)$, we have $\chi(\underline{M},[S(N)])=0$ and so we conclude that $\chi([N],\underline{M})=0$. Using the equality $\chi(M,N)=\chi(S^{-1}(N),M)$ of Lemma~\ref{lem:aux1}, the proof of the inclusion $\mathrm{Ker}_R(\chi) \subseteq \mathrm{Ker}_L(\chi)$ is similar.
\end{proof}
Let $(\cA,e)$ and $(\cB,e')$ be two noncommutative Chow motives. Recall that $\cA$ and $\cB$ are smooth and proper dg categories and that $e$ and $e'$ are idempotent elements of $K_0(\cA^\op \otimes \cA)_F$ and $K_0(\cB^\op \otimes \cB)_F$, respectively. Recall also that
\begin{equation}\label{eq:description}
\Hom_{\NChow(k)_F}((\cA,e),(\cB,e')):= (e \circ K_0(\cA^\op \otimes \cB)_F \circ e')\,.
\end{equation}
Since smooth and proper dg categories are stable under tensor product (see \cite[\S4]{CT1}), the above bilinear form \eqref{eq:bilinear} (applied to $\cA=\cA^\op \otimes \cB$) restricts to a bilinear form
\begin{equation*}
\chi(-,-): \Hom_{\NChow(k)_F}((\cA,e),(\cB,e')) \otimes_F \Hom_{\NChow(k)_F}((\cA,e),(\cB,e')) \too F\,.
\end{equation*}
By Theorem~\ref{thm:K1} we obtain then a well-defined kernel $\mathrm{Ker}(\chi)$. These kernels (one for each ordered pair of noncommutative Chow motives) assemble themselves in a $\otimes$-ideal $\cK\mathrm{er}(\chi)$ of the category $\NChow(k)_F$.
\begin{definition}[Kontsevich~\cite{IAS}]
The category $\NNumK(k)_F$ of {\em noncommutative numerical motives} (over $k$ and with coefficients in $F$) is the pseudo-abelian envelope of the quotient category $\NChow(k)_F/\cK\mathrm{er}(\chi)$.
\end{definition}
\begin{remark}
The fact that $\cK\mathrm{er}(\chi)$ is a well-defined $\otimes$-ideal of $\NChow(k)_F$ will become clear(er) after the proof of Theorem~\ref{thm:main}.
\end{remark}
\section{Alternative approach}\label{sec:semi}
The authors introduced in \cite{Semi} an alternative category $\NNum(k)_F$ of noncommutative numerical motives. Let $(\cA,e)$ and $(\cB,e')$ be two noncommutative Chow motives and $\underline{X}=(e \circ [\sum_i a_iX_i]\circ e')$ and $\underline{Y}=(e' \circ [\sum_j b_jY_j]\circ e)$ 
two correspondences. Recall that $X_i$ and $Y_j$ are bimodules and that the sums are indexed by a finite set. The {\em intersection number} $ \langle \underline{X} \cdot \underline{Y} \rangle$ of $\underline{X}$ with $\underline{Y}$ is given by the formula
\begin{equation*}
\sum_{i,j,n} (-1)^n\, a_i \cdot b_j \cdot \mathrm{dim} \, HH_n(\cA,X_i \otimes_\cB Y_j) \in F\,,
\end{equation*}
where $HH_n(\cA,X_i\otimes_\cB Y_j)$ denotes the $n^{\mathrm{th}}$-Hochschild homology group of $\cA$ with coefficients in the $\cA\textrm{-}\cA$-bimodule $X_i \otimes_\cB Y_j$. This procedure gives rise to a well-defined bilinear pairing
$$\langle -\cdot -\rangle : \Hom_{\NChow(k)_F}((\cA,e),(\cB,e')) \otimes_F \Hom_{\NChow(k)_F}((\cB,e'),(\cA,e)) \too F\,.$$
In contrast with $\chi(-,-)$, this bilinear pairing is symmetric. A correspondence $\underline{X}$ is {\em numerically equivalent to zero} if for every correspondence $\underline{Y}$ the intersection number $\langle \underline{X} \cdot \underline{Y}\rangle$ is zero. As proved in \cite[Thm.~1.5]{Semi}, the correspondences which are numerically equivalent to zero form a $\otimes$-ideal $\cN$ of the category $\NChow(k)_F$. The {\em category of noncommutative numerical motives} $\NNum(k)_F$ is then defined as the pseudo-abelian envelope of the quotient category $\NChow(k)_F/\cN$.
\section{Proof of Theorem~\ref{thm:main}}
The proof will consist on showing that the $\otimes$-ideals $\cK\mathrm{er}(\chi)$ and $\cN$, described respectively in \S\ref{sec:K} and \S\ref{sec:semi}, are exactly the same. As explained in the proof of Theorem~\ref{thm:aux} it is equivalent to work with smooth and proper dg categories or with smooth and proper dg algebras. In what follows we will use the latter approach. 

Let $A$ be a dg algebra and $M$ a right dg $A$-module. We will denote by $D(M)$ its {\em dual}, \ie the left dg $A$-module $\cC_\dg(A)(M,A)$.
This procedure is (contravariantly) functorial in $M$, and thus gives rise to a triangulated functor $\cD(A) \to \cD(A^\op)^\op$ which restricts to an equivalence $\cD_c(A) \stackrel{\sim}{\to} \cD_c(A^\op)^\op$. Since the Grothendieck group of a triangulated category is canonically isomorphic to the one of the opposite category, we obtain then an induced isomorphism $K_0(A)_F \stackrel{\sim}{\to} K_0(A^\op)_F$.
\begin{proposition}\label{prop:trace}
Let $A$ and $B$ be two smooth and proper dg algebras and $X,Y \in \cD_c(A^\op \otimes B)$. Then, $\chi(X,Y) \in F$ agrees with the categorical trace of the correspondence $[Y \otimes_B D(X)] \in \End_{\NChow(k)_F}((A,\id_A))$. 
\end{proposition}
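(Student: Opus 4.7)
The strategy is to compute both sides directly and match them via the defining property of a categorical trace. The first step is to identify what the categorical trace is in $\NChow(k)_F$. Since $A$ is smooth and proper, the noncommutative Chow motive $(A,\id_A)$ is dualizable with dual $(A^\op,\id_{A^\op})$, and both the evaluation $(A^\op,\id_{A^\op})\otimes(A,\id_A)\to(k,\id_k)$ and the coevaluation $(k,\id_k)\to(A,\id_A)\otimes(A^\op,\id_{A^\op})$ are represented by the $A^e$-module $A$ (where $A^e:=A\otimes A^\op$), placed in the two slots appropriately. Since composition of correspondences in $\NChow(k)_F$ is given by (derived) tensor product of bimodules, a direct unpacking of the composite $\mathrm{ev}\circ([Z]\otimes\id_{(A^\op,\id)})\circ\mathrm{coev}$ shows that the categorical trace of an endomorphism $[Z]$ of $(A,\id_A)$ is the class of $A\otimes^L_{A^e}Z$ in $K_0(k)_F=F$, that is,
$$\mathrm{tr}([Z])\;=\;\sum_n (-1)^n\dim \HH_n(A,Z)\,.$$

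Applying this identity to $Z = Y\otimes^L_B D(X)$, the proposition reduces to establishing a natural quasi-isomorphism
$$\HH_\ast\bigl(A,\,Y\otimes^L_B D(X)\bigr)\;\simeq\;\mathrm{RHom}_{A^\op\otimes B}(X,Y)\,.$$
I would prove this in two moves. First, since $X\in\cD_c(A^\op\otimes B)$ is compact, the tensor--Hom adjunction yields a natural quasi-isomorphism in $\cD(k)$,
$$\mathrm{RHom}_{A^\op\otimes B}(X,Y)\;\simeq\;Y\otimes^L_{A^\op\otimes B}D(X)\,.$$
Second, tensoring over $A^\op\otimes B$ can be decomposed into tensoring over $B$ (pairing the right $B$-action on $Y$ against the left $B$-action on $D(X)$), followed by tensoring the resulting $A^e$-module against $A$ over $A^e$:
$$Y\otimes^L_{A^\op\otimes B}D(X)\;\simeq\;A\otimes^L_{A^e}\bigl(Y\otimes^L_B D(X)\bigr)\;=\;\HH_\ast\bigl(A,\,Y\otimes^L_B D(X)\bigr)\,.$$
Taking alternating sums of dimensions of cohomology on both sides then yields
$$\chi(X,Y)\;=\;\sum_n(-1)^n\dim \HH_n\bigl(A,\,Y\otimes^L_B D(X)\bigr)\;=\;\mathrm{tr}\bigl([Y\otimes_B D(X)]\bigr),$$
which is the desired identity.

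The main obstacle I anticipate is in the first step: one has to verify that the evaluation and coevaluation for $(A,\id_A)$ are indeed represented by the bimodule $A$, oriented so that the composite $\mathrm{ev}\circ([Z]\otimes\id)\circ\mathrm{coev}$ literally computes $A\otimes^L_{A^e}Z$ rather than some twist of it; this is essentially the content of $A$ being smooth and proper in the monoidal category $\NChow(k)_F$. A secondary care-point is that $D(X)$, defined in the paper as the dual inside $\cC_\dg(A^\op\otimes B)$, must behave like the $B$-dual of $X$ with residual $A$-action, as used implicitly in the decomposition step above; this uses the smoothness of $B$. Throughout, properness ensures that all dimensions appearing are finite and the alternating sums are legitimate elements of $F$.
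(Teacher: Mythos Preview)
Your proposal is correct and follows essentially the same route as the paper: both compute the categorical trace of $[Z]$ as the class of $A\otimes^L_{A^e}Z$ in $K_0(k)_F$ (the paper writes this as $(Y\otimes_B D(X))\otimes_{A^\op\otimes A}A^\op$ without invoking the Hochschild name), and then identify it with $\mathrm{RHom}_{A^\op\otimes B}(X,Y)\simeq \widehat{(A^\op\otimes B)}_\pe(X,Y)$ via the same Fubini-plus-duality manipulation. One minor correction to your closing remarks: the decomposition $Y\otimes^L_{A^\op\otimes B}D(X)\simeq A\otimes^L_{A^e}(Y\otimes^L_B D(X))$ is pure associativity of iterated tensor products and needs no smoothness hypothesis on $B$; smoothness and properness enter only to guarantee compactness of the relevant modules and finiteness of the dimensions.
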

\begin{proof}
The $A\textrm{-}B$-bimodules $X$ and $Y$ give rise, respectively, to correspondences $[X]: (A, \id_A) \to (B, \id_B)$ and $[Y]: (A, \id_A) \to (B, \id_B)$ in $\NChow(k)_F$. On the other hand, the $B\textrm{-}A$-bimodule $D(X):=\widehat{(A^\op \otimes B)}_\pe(X,A^\op \otimes B) \in \cD_c(B^\op \otimes A)$ (see Notation~\ref{not:perf}) gives rise to a correspondence $[D(X)]: (B,\id_B) \to (A,\id_A)$. We can then consider the following composition
\begin{equation}\label{eq:endo}
[Y \otimes_B D(X)]: (A,\id_A) \stackrel{[Y]}{\too} (B,\id_B) \stackrel{[D(X)]}{\too} (A,\id_A)\,.
\end{equation}
Recall from \cite{CvsNC} that the $\otimes$-unit of $\NChow(k)_F$ is the noncommutative motive $(k,\id_k)$, where $k$ is the ground field considered as a dg algebra concentrated in degree zero. Recall also that the dual of $(A, \id_A)$ is $(A^\op, \id_{A^\op})$ and that the evaluation map $(A, \id_A) \otimes (A^\op, \id_{A^\op}) \stackrel{\mathrm{ev}}{\to} (k, \id_k)$ is given by the class in $K_0(A^\op \otimes A)_F$ of $A$ considered as a $A\textrm{-}A$-bimodule.
Hence, the categorical trace of the correspondence \eqref{eq:endo} is the following composition
$$ (k, \id_k) \stackrel{[Y \otimes_B D(X)]}{\too} (A^\op,\id_{A^\op}) \otimes (A,\id_A) \simeq (A,\id_A) \otimes (A^\op,\id_{A^\op}) \stackrel{[A]}{\too} (k, \id_k)\,.$$
Since the composition operation in $\NChow(k)_F$ is given by the tensor product of bimodules, the above composition corresponds to the class in $K_0(k)_F \simeq F$ of the complex of $k$-vector spaces
\begin{equation}\label{eq:final}
(Y \otimes_B D(X)) \otimes_{A^\op \otimes A} A^\op\,.
\end{equation}
Thanks to the natural isomorphisms 
\begin{equation*}
(Y \otimes_B D(X))\otimes_{A^\op \otimes A} A^\op \simeq  Y \otimes_{A^\op \otimes B} D(X) \simeq \widehat{(A^\op \otimes B)}_\pe(X,Y)
\end{equation*}
we conclude that \eqref{eq:final} is naturally isomorphic to $\widehat{(A^\op \otimes B)}_\pe(X,Y)$. As a consequence they have the same Euler characteristic 
\begin{equation*}
\sum_i(-1)^i \mathrm{dim}\,\textrm{H}^i((Y \otimes_B D(X)) \otimes_{A^\op \otimes A} A^\op) = \sum_i(-1)^i \mathrm{dim}\,\textrm{H}^i(\widehat{(A^\op \otimes B)}_\pe(X,Y))\,.
\end{equation*}
The natural isomorphisms of $k$-vector spaces \eqref{eq:nat-iso} (applied to $\cA=A^\op \otimes B$, $M=X$ and $N=Y$) allow us then to conclude that the right hand-side of the above equality agrees with $\chi(X,Y) \in \bbZ$. On the other hand, the left hand-side is simply the class of the complex \eqref{eq:final} in the Grothendieck group $K_0(k)=\bbZ$. As a consequence, this equality holds also on the $F$-linearized Grothendieck group $K_0(k)_F\simeq F$ and so the proof is finished. 
\end{proof}
Now, let $(A,e)$ and $(B,e')$ be two noncommutative Chow motives (with $A$ and $B$ dg algebras). As explained above, the duality functor induces an isomorphism $K_0(A^\op \otimes B)_F \simeq K_0(B^\op \otimes A)_F$ on the $F$-linearized Grothendieck groups. Via the description \eqref{eq:description} of the Hom-sets of $\NChow(k)_F$, we obtain then an induced duality isomorphism
\begin{equation}\label{eq:res-iso}
D(-): \Hom_{\NChow(k)_F}((A,e),(B,e')) \stackrel{\sim}{\too} \Hom_{\NChow(k)_F}((B,e'),(A,e))\,.
\end{equation} 
\begin{proposition}\label{prop:diag-com}
The following square
$$
\xymatrix{
\Hom_{\NChow(k)_F}((A,e),(B,e')) \otimes_F \Hom_{\NChow(k)_F}((A,e),(B,e'))  \ar[d]_{\eqref{eq:res-iso}\otimes \id}^\simeq \ar[rr]^-{\chi(-,-)} && F \ar@{=}[d] \\
\Hom_{\NChow(k)_F}((B,e'),(A,e))  \otimes_F \Hom_{\NChow(k)_F}((A,e),(B,e'))  \ar[rr]_-{\langle - \cdot -\rangle} && F
}
$$
is commutative.
\end{proposition}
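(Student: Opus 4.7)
The plan is to express both compositions in the square as the same Hochschild-homology Euler characteristic. By $F$-bilinearity of the two pairings and of the duality isomorphism~\eqref{eq:res-iso}, it suffices to verify the identity on elementary morphisms represented by single bimodules: given $X, Y \in \cD_c(A^\op \otimes B)$, viewed (after pre- and post-composition with the idempotents $e, e'$) as morphisms $(A, e) \to (B, e')$ in $\NChow(k)_F$, one has to show that
\[
\chi(X, Y) = \langle D(X) \cdot Y \rangle.
\]

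For the left-hand side, I would invoke Proposition~\ref{prop:trace}: the scalar $\chi(X, Y)$ agrees with the categorical trace of the endomorphism $[Y \otimes_B D(X)] \in \End_{\NChow(k)_F}((A, \id_A))$. The proof of that proposition goes further and exhibits this categorical trace as the Euler characteristic of the complex of $k$-vector spaces $(Y \otimes_B D(X)) \otimes_{A^\op \otimes A} A^\op$. This last complex, however, is precisely the one computing the Hochschild homology of $A$ with coefficients in the $A$-$A$-bimodule $Y \otimes_B D(X)$ (the derived and underived tensor products here coincide up to quasi-isomorphism, since $A$ is smooth and the relevant bimodules are perfect). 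Consequently
\[
\chi(X, Y) = \sum_n (-1)^n \mathrm{dim}\, HH_n(A,\, Y \otimes_B D(X)).
\]

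For the right-hand side, I would use the symmetry of $\langle - \cdot - \rangle$ recalled in~\S\ref{sec:semi} to rewrite $\langle D(X) \cdot Y \rangle = \langle Y \cdot D(X) \rangle$, and then apply the definition of the intersection number directly to the pair $(Y, D(X))$. Since the composition of these correspondences in $\NChow(k)_F$ is represented by the $A$-$A$-bimodule $Y \otimes_B D(X)$, the formula of \S\ref{sec:semi} yields the same alternating sum $\sum_n (-1)^n \mathrm{dim}\, HH_n(A, Y \otimes_B D(X))$. Comparing with the previous display gives the commutativity of the square.

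The main obstacle is notational rather than conceptual: one has to carefully match the sign and degree conventions in Proposition~\ref{prop:trace} (a cohomological Euler characteristic of a Hom-complex in $\widehat{(A^\op \otimes B)}_\pe$) with those in the definition of $\langle - \cdot -\rangle$ (an alternating sum of Hochschild homology dimensions), and verify that the idempotents $e, e'$ propagate transparently through the trace computation via the Hom-set description~\eqref{eq:description}. Once these bookkeeping points are settled, the argument reduces to the two equal expressions exhibited above.
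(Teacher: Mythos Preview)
Your argument is correct and follows essentially the same route as the paper: reduce by bilinearity to classes of single bimodules $X,Y$, invoke Proposition~\ref{prop:trace} for the $\chi$-side, and use the symmetry of $\langle-\cdot-\rangle$ on the other side. The only difference is cosmetic: the paper takes the \emph{categorical trace} of $[Y\otimes_B D(X)]$ as the common value of both pairings, citing \cite[Corollary~4.4]{Semi} to identify the intersection number with that trace, whereas you go one step further inside the proof of Proposition~\ref{prop:trace} and take the Hochschild Euler characteristic $\sum_n(-1)^n\dim HH_n(A,\,Y\otimes_B D(X))$ as the common value, reading it off directly from the definition in \S\ref{sec:semi}. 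Your version trades the external citation for the (standard) identification of the trace complex $(Y\otimes_B D(X))\otimes_{A^\op\otimes A}A^\op$ with Hochschild homology; the two packagings are equivalent and of the same length.
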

\begin{proof}
Since the $F$-linearized Grothendieck group $K_0(A^\op \otimes B)_F$ is generated by the elements of shape $[X]$, with $X \in \cD_c(A^\op \otimes B)$, and $\chi(-,-)$ and $\langle -\cdot - \rangle$ are bilinear, it suffices to show the commutativity of the above square with respect to the correspondences $\underline{X}=(e \circ [X]\circ e')$ and $\underline{Y}=(e \circ [Y] \circ e')$. By Proposition~\ref{prop:trace}, $\chi(\underline{X},\underline{Y})=\chi(X,Y) \in F$ agrees with the categorical trace in $\NChow(k)_F$ of the correspondence $[Y\otimes_B D(X)] \in \End_{\NChow(k)_F}((A,\id_A))$. 

On the other hand, since the bilinear pairing $\langle -\cdot - \rangle$ is symmetric, we have the following equality $\langle D(\underline{X})\cdot \underline{Y}\rangle = \langle \underline{Y} \cdot D(\underline{X})\rangle$. By \cite[Corollary~4.4]{Semi}, we then conclude that the intersection number $\langle \underline{Y} \cdot D(\underline{X}) \rangle$ agrees also with the categorical trace of the correspondence $[Y\otimes_B D(X)]$. The proof is then achieved.
\end{proof}
We now have all the ingredients needed to prove Theorem~\ref{thm:main}. We will show that a correspondence $\underline{X} \in \Hom_{\NChow(k)_F}((A,e),(B,e'))$ belongs to $\mathrm{Ker}(\chi)$ if and only if it is numerically equivalent to zero. Assume first that $\underline{X} \in \mathrm{Ker}_R(\chi)=\mathrm{Ker}(\chi)$. Then, by Proposition~\ref{prop:diag-com}, the intersection number $\langle D(\underline{Y}) \cdot \underline{X} \rangle$ is trivial for every correspondence $\underline{Y} \in \Hom_{\NChow(k)_F}((A,\id_A), (B,\id_B))$. The symmetry of the bilinear pairing $\langle -\cdot -\rangle$, combined with isomorphism \eqref{eq:res-iso}, allow us then to conclude that $\underline{X}$ is numerically equivalent to zero. 

Now, assume that $\underline{X}$ is numerically equivalent to zero. Once again the symmetry of the bilinear pairing $\langle-\cdot -\rangle$, combined with isomorphism \eqref{eq:res-iso}, implies that $\chi(\underline{Y},\underline{X})=0$ for every correspondence $\underline{Y} \in \Hom_{\NChow(k)_F}((A,\id_A),(B,\id_B))$. As a consequence, $\underline{X} \in \mathrm{Ker}_R(\chi)=\mathrm{Ker}(\chi)$. The above arguments hold for all noncommutative Chow motives and correspondences. Therefore, the $\otimes$-ideals $\cK\mathrm{er}(\chi)$ and $\cN$, described respectively in \S\ref{sec:K} and \S\ref{sec:semi}, are exactly the same and so the proof of Theorem~\ref{thm:main} is finished. 

\medbreak

\noindent\textbf{Acknowledgments:} The authors are very grateful to Yuri~Manin for stimulating discussions.

\end{document}